\theoremstyle{sltheoremstyle}
\newtheorem{theorem}{Theorem}[section]
\newtheorem{lemma}[theorem]{Lemma}
\newtheorem{corollary}[theorem]{Corollary}
\newtheorem{proposition}[theorem]{Proposition}
\newtheorem{condition}[theorem]{Condition}
\newtheorem*{corollary-non}{Corollary}
\newtheorem*{lemma-non}{Lemma}
\newtheorem*{theorem-non}{Theorem}
\newtheorem*{proposition-non}{Proposition}
\newtheorem*{condition-non}{Condition}
\newtheorem*{conditions-non}{Conditions}
\theoremstyle{definition}
\newtheorem{notation}[theorem]{Notation}
\newtheorem{remark}[theorem]{Remark}
\newtheorem{definition}[theorem]{Definition}
\newcommand{\extp}{\@ifnextchar^\@extp{\@extp^{\,}}}
\def\@extp^#1{\mathop{\bigwedge\nolimits^{\!#1}}}
\newcommand\blfootnote[1]{%
  \begingroup
  \renewcommand\thefootnote{}\footnote{#1}%
  \addtocounter{footnote}{-1}%
  \endgroup
}
\newcommand*{\img}[1]{%
    \raisebox{-.0\baselineskip}{%
        \includegraphics[
        height=\baselineskip,
        width=\baselineskip,
        keepaspectratio,
        ]{#1}%
    }%
}
\newcommand{\white}{\;\;\;}
\newcommand{\Grass}{\textnormal{Grass}}
\newcommand{\Jac}{\text{Jac}}
\newcommand{\NL}{\textnormal{NL}}
\newcommand{\sm}{\setminus}
\newcommand{\CC}{\mathbb{C}}
\newcommand{\OO}{\mathcal{O}}
\newcommand{\QQ}{\mathbb{Q}}
\newcommand{\Sym}{\textnormal{Sym}}
\newcommand{\GL}{\textnormal{GL}}
\newcommand{\RR}{\mathbb{R}}
\newcommand{\ZZ}{\mathbb{Z}}
\newcommand{\Pic}{\textnormal{Pic}}
\newcommand{\Ker}{\textnormal{Ker}}
\newcommand{\id}{\textnormal{id}}
\newcommand{\Hom}{\textnormal{Hom}}
\newcommand{\pr}{\textnormal{pr}}
\newcommand{\Gal}{\textnormal{Gal}}
\newcommand{\Spec}{\textnormal{Spec }}
\newcommand{\Sp}{\textnormal{Sp}}
\newcommand{\ca}[1]{{\mathcal{#1}}}
\newcommand{\bb}[1]{{\mathbb{#1}}}
\newcommand{\mr}[1]{{\mathscr{#1}}}
\newcommand{\tn}[1]{{\textnormal{#1}}}
\let\mod\relax
\DeclareMathOperator{\mod}{\textnormal{mod}}
\DeclareSymbolFont{bbm}{U}{bbm}{m}{n}
\DeclareSymbolFontAlphabet{\mathbbm}{bbm}
\title{\textsc{Real moduli spaces and density of non-simple real abelian varieties}}
\author{\normalsize Olivier de Gaay Fortman\textcolor{blue}{$^{\ast}$}}
\date{} 
\begin{document}

\pagestyle{empty}

\maketitle

\pagestyle{empty}

\abstract{\centering{\scriptsize{\noindent
For fixed $k< g$ and a family of polarized abelian varieties of dimension $g$ over ${\bb R}$, we give a criterion for the density in the parameter space of those abelian varieties over ${\bb R}$ containing a $k$-dimensional abelian subvariety over ${\bb R}$. As application, we prove density of such a set in the moduli space of polarized real abelian varieties of dimension $g$, and density of real algebraic curves mapping non-trivially to real $k$-dimensional abelian varieties in the moduli space of real algebraic curves as well as in the space of real plane curves. This extends to the real setting results by Colombo and Pirola \cite{Colombo1990}. We then consider the real locus of an algebraic stack over $\RR$, attaching a topological space to it. For a real moduli stack, this defines a real moduli space. We show that for $\ca M_g$ and $\ca A_g$, the real moduli spaces that arise in this way coincide with the moduli spaces of Gross-Harris \cite{grossharris} and Sepp\"al\"a-Silhol \cite{seppalasilhol2}.} 
}
}

\pagestyle{empty}

\section{Introduction}
\label{introduction}

Fix an integer $g \geq 1$. Let $\ca A$ and $B$ be complex manifolds, and let
\begin{equation} \label{family}
\psi:  \ca A \to B, \white s: B \to \ca A, \white E \in R^2\psi_*\bb Z
\end{equation} be a polarized holomorphic family of $g$-dimensional complex abelian varieties. The map $\psi$ is a proper holomorphic submersion, $s$ is a section of $\psi$, and $A_t: = \psi^{-1}(t) $ is a complex abelian variety of dimension $g$ with origin $s(t)$, polarized by $E_t \in H^2(A_t, \bb Z)$, for $t \in B$. 
\\
\\
Suppose moreover that $\psi$ admits a real structure in the following sense: $\ca A$ and $B$ are equipped with anti-holomorphic involutions $\tau$ and $\sigma$, commuting with $\psi$ and $s$ and compatible with the polarization, in the sense $\tau^*(E) = -E$. For example, this is true when $\psi$ is real algebraic, i.e. induced by a polarized abelian scheme over a smooth $\bb R$-scheme.
\\
\\
Let $B({\bb R})$ be the set of fixed points under the involution $\sigma: B \to B$. If $t \in B({\bb R})$, then $A_t$ is equipped with an anti-holomorphic involution $\tau$ preserving the group law. We shall not distinguish between the category of abelian varieties over ${\bb R}$ and the category of complex abelian varieties equipped with an anti-holomorphic involution preserving the group law. Thus, if $t \in B({\bb R})$, then $A_t$ is an abelian variety over $\bb R$. Define $R_k \subset B({\bb R})$:
\begin{equation}\label{rk}
R_k  = \left\{ t \in B({\bb R}): A_t \text{\textit{ contains an abelian subvariety over ${\bb R}$ of dimension }} k \right\}.
\end{equation}
For $t \in B$, the polarization gives an isomorphism $H^{0,1}(A_t) \cong H^{1,0}(A_t)^*$; using the dual of the differential of the period map we obtain a symmetric bilinear form 
\begin{equation} \label{symbil}
q: H^{1,0}(A_t) \otimes H^{1,0}(A_t) \to T_t^*B. 
\end{equation}

\blfootnote{\textcolor{blue}{$^\ast$}\footnotesize{\'Ecole normale sup\'erieure, 45 rue d'Ulm, Office T17, 75230 Paris, \href{mailto:olivier.de.gaay.fortman@ens.fr}{olivier.de.gaay.fortman@ens.fr}.}} 
\blfootnote{\textcolor{blue}{$^\ast$}\textit{Date:} \today $\;$
- This project has received funding from the European Union's Horizon 2020 research and innovation programme under the Marie Sk\l{}odowska-Curie grant agreement N\textsuperscript{\underline{o}} 754362.$\;$\img{EU}} 

\restoregeometry
\pagestyle{plain}

\begin{condition}  \label{criterion} 
There exists an element $t \in B$ and a $k$-dimensional complex subspace $W \subset H^{1,0}(A_t)$ such that the complex $0 \to  \bigwedge^2 W \to W \otimes H^{1,0}(A_t) \to T_t^*B $ is exact.
\end{condition}
\noindent
Our main theorem is the following.
\begin{theorem}  \label{theorem1} 
If $B$ is connected and if Condition \ref{criterion} holds, then $R_k$ is dense in $B({\bb R})$. 
\end{theorem}
\noindent
We give the following three applications of Theorem \ref{theorem1}. A \textit{real algebraic curve} will be a proper, smooth, geometrically connected curve over ${\bb R}$. Let $\mr M_g^{\bb R}$ be the set of isomorphism classes of real algebraic curves of genus $g$, and $\mr A_g^{{\bb R}}$ the set of isomorphism classes of real principally polarized abelian varieties of dimension $g$. The sets $\mr A_g^{{\bb R}}$ and $\mr M_g^{\bb R}$ carry natural real semi-analytic structures by work of Gross-Harris \cite{grossharris} and Sepp\"{a}l\"{a}-Silhol \cite{seppalasilhol2}.

\begin{theorem} \label{theorem2}
\begin{enumerate}
\item[\hypertarget{theoremA}{\textbf{A.}}] 
Given an integer $k$ with $1 \leq k \leq g-1$, abelian varieties over $\bb R$ \\
containing a $k$-dimensional abelian subvariety over ${\bb R}$ are dense in the moduli space \\
$\mr A_g^{{\bb R}}$ of principally polarized abelian varieties of dimension $g$ over ${\bb R}$. \item[\hypertarget{theoremB}{\textbf{B.}}] For $g \geq 3$ and $k \in \{1,2,3\}$, real algebraic curves $C$ that admit a map $\varphi: C \to A$ \\ with $A$ a $k$-dimensional abelian variety over ${\bb R}$ such that $\varphi(C)$ generates $A$ as an \\ algebraic group are dense in the moduli space $\mr M_g^{{\bb R}}$ of real genus $g$ algebraic curves.
\item[\hypertarget{theoremC}{\textbf{C.}}] If $V \subset \bb PH^0(\bb P^2_{{\bb R}}, \OO_{\bb P^2_{{\bb R}}}(d))$ is the real algebraic set of degree $d$ smooth plane curves \\ over ${\bb R}$, then the subset of $V$ corresponding to those curves that map non-trivially to \\ elliptic curves over $\bb R$ is dense in $V$. 
\end{enumerate}
\end{theorem}
 \noindent
 \begin{remark}
The topology in the moduli spaces of Theorem \ref{theorem2}.\hyperlink{theoremA}{A} and \ref{theorem2}.\hyperlink{theoremB}{B} is the one underlying the real semi-analytic structure. We prove in Section \ref{realan} that these topologies have other more intrinsic incarnations. It might also be worth noting that, although well-known in the complex case, Theorem \ref{theorem2}.\hyperlink{theoremA}{A} is new in the real case. 
\end{remark}
\noindent Our proofs rely on results in the complex setting that were proved by Colombo and Pirola in \cite{Colombo1990}. Indeed, Theorem \ref{theorem1} is the analogue over $\bb R$ (with \textit{unchanged} hypothesis) of the following theorem. Define $S_k \subset B$ to be the set of those $t$ in $B$ for which the complex abelian variety $A_t$ contains a complex abelian subvariety of dimension $k$.
\begin{theorem}[Colombo-Pirola \cite{Colombo1990}] \label{colpol}
If $B$ is connected and if Condition \ref{criterion} holds, then $S_k$ is dense in $B$. $\hfill \qed$
\end{theorem}
\noindent
Colombo and Pirola in turn were inspired by the Green-Voisin Density Criterion \cite[Proposition 17.20]{voisin}. Indeed, the latter gives a criterion for density of the locus where the fiber contains many Hodge classes for a variation of Hodge structure of weight $2$. Theorem \ref{colpol} adapts this result to a polarized variation of Hodge structure of weight $1$ (which is nothing but a polarized family of complex abelian varieties). The result is a criterion for the density of the locus where the fiber admits a sub-Hodge structure of dimension $k$. 
\\
\\
To be a little more precise, recall that for a complex manifold $U$ and a rational weight $2$ variation of Hodge structure $(H_{\bb Q}^2, \ca H, F^1, \nabla)$ on $U$, the \textit{Noether-Lefschetz locus} $\tn{NL}(U) \subset U$ is the locus where the rank of the vector space of Hodge classes is bigger than the general value. If $H_{\bb Q}^2$ is polarizable then $\tn{NL}(U)$ is a countable union of closed algebraic subvarieties of $U$ \cite{MR1273413}. The Green-Voisin Density Criterion referred to above decides whether $\tn{NL}(U)$ is dense in $U$. It was first stated in \cite{NLlocus} and applied to the universal degree $d\geq4$ surface $\mr S \to \mr B$ in $\bb P^3$. In this case, $\tn{NL}(\mr B)$ is the locus where the Picard group is not generated by a hyperplane section, and the union of the general components of $\tn{NL}(\mr B)$ is dense in $\mr B$ [\textit{loc. cit.}]. Analogously, $S_k$ is a countable union of components of $\NL(B)$ \cite{laszlodebarre}, and Theorem \ref{colpol} says that this union is dense in $B$. 
\\
\\
Now let us carry the discussion over to the real setting. Unfortunately, the Green-Voisin Density Criterion cannot be adapted to the reals without altering the hypothesis. Going back to the universal family $\mr S \to \mr B$ of degree $d\geq 4$ surfaces in $\bb P^3_{{\bb C}}$, one observes that this family has a real structure, so that we can define the \textit{real Noether-Lefschetz locus} $\NL(\mr B({\bb R})) \subset \mr B({\bb R})$ as the locus of real surfaces $S$ in $\bb P^3_{{\bb R}}$ with $\Pic(S) \neq {\bb Z}$. By the above, the Green-Voisin Density Criterion is fulfilled hence $\NL(\mr B)$ is dense in $\mr B$, whereas density of $\NL(\mr B({\bb R}))$ in $\mr B(\bb R)$ may fail: for every degree $4$ surface in $\bb P^3_{{\bb R}}$ whose real locus is a union of $10$ spheres, $\Pic(S) = {\bb Z}$, and so $\NL(\mr B({\bb R}))  \cap K = \emptyset$ for any connected component $K$ of surfaces of such a topological type \cite[Rem.1.5]{benoistttt}. There is an alternate criterion \cite[Prop.1.1]{benoistttt}, but the hypothesis is more complicated thus harder to fulfill, and only implies density of $\NL(\mr B({\bb R}))$ in one component of $\mr B(\bb R)$ at a time. It is therefore remarkable that for the real analogue of density of $S_k$ in $B$, none of these problems occur. Theorem \ref{theorem1} shows that the complex density criterion can be carried over to the reals \textit{without changing it}. Condition \ref{criterion} does not involve the real structures at all, applying to any real structure on the family. The result is density of $S_k \subset B$ \textit{and} $R_k \subset B({\bb R})$. It is for this reason that the applications of Theorem \ref{theorem1} are generous: the statements in Theorem \ref{theorem2}, as well as their proofs, are direct analogues of some applications of Theorem \ref{colpol} in \cite{Colombo1990}. 
\\
\\
Let us comment on the topologies appearing in Theorem \ref{theorem2}.\hyperlink{theoremA}{A}\&\hyperlink{theoremB}{B}. One may be inclined to believe that to obtain a \textit{real moduli space} (i.e. a reasonable topology on the set of real isomorphism classes), one equips the complex moduli space with an anti-holomorphic involution and considers the set of fixed points. This often fails, as it does for $\mr A_g^{\bb R}$ and $\mr M_g^{\bb R}$: there can be a complex algebraic variety carrying different real structures (e.g. $y^2 = x^3 + x$ and $y^2 = x^3-x$ in $\mr A_1^{\bb C}$), and there can be real points of the complex moduli space that do not represent any real variety (e.g. the example \cite[86]{silholsurfaces} in $\mr A_2^{\bb C}$ due to Shimura \cite{shimura1972}). The obstruction is that these moduli spaces are not fine - indeed, one solution is to cover $\mr A_g^{\bb C}$ and $\mr M_g^{\bb C}$ by spaces that rigidify the varieties in such a way that any real structure can be lifted to a real structure compatible with the rigidification: take the disjoint union of several fixed point sets of non-equivalent real structures on these covering spaces and quotient out by appropriate groups to obtain a semi-analytic structure on $\mr A_g^{\bb R}$ and on $\mr M_g^{\bb R}$. This was done by Gross and Harris \cite{grossharris} for $\mr A_g^{\bb R}$ using the Siegel space $\bb H_g \to \mr A_g^{\bb C}$ and by Sepp\"al\"a and Silhol \cite{seppalasilhol2} for $\mr M_g^{\bb R}$ using the Teichm\"uller space $\ca T_g \to \mr M_g^{\bb C}$. 
\\
\\
In the second part of this paper we put our density results in some perspective. The goal is to prove that the above topologies on $\mr A_g^{\RR}$ and $\mr M_g^{\RR}$ are natural in some sense. In complex geometry there are many ways to construct a moduli space of complex varieties, among which natural ones use algebraic stacks. We show that something similar holds over $\RR$, and study the real locus of an algebraic stack $\mr X$ of finite type over $\RR$. We define a topology on the set $|\mr X(\RR)|$ of isomorphism classes of $\mr X(\RR)$ in a way that generalizes the euclidean topology on $\mr X(\RR)$ when $\mr X$ is a scheme. If $\mr X$ admits a coarse moduli space $\mr X \to M$, $|\mr X(\RR)|$ should be thought of as the real analogue of the euclidean topology on $M(\CC)$. The point is that we cannot use $M(\RR)$ since this set will almost never be in bijection with $|\mr X(\RR)|$. For the stacks of abelian varieties $\ca A_g$ and curves $\ca M_g$, the bijections $|\ca A_g(\RR)| \cong \mr A_g^{\RR}$ and $|\ca M_g(\RR)| \cong \mr M_g^{\RR}$ are homeomorphisms, see Theorems \ref{th:homeomorphismmoduli} and \ref{th:homeomorphismmoduli2}.
\\
\\
Finally, we would like to remark that Colombo-Pirola's Theorem \ref{colpol} has been generalized by Ching-Li Chai \cite{Chai1998DensityOM}, who considers a variation of rational Hodge structures over a complex analytic variety and rephrases and answers the following question in the context of Shimura varieties: when do the points corresponding to members having extra Hodge cycles of a given type form a dense subset of the base? It should be interesting to investigate whether such a generalization can be carried over to the real numbers as well.

\subsection{Outline of the paper}
This paper is organized as follows. Section \ref{realfamily} is devoted to the proof of Theorem \ref{theorem1}. In Section \ref{satisfydensity} we satisfy Condition \ref{criterion} in the case of a universal local deformation of a polarized abelian variety. In Section \ref{realmoduli} we prove that any real structure on a complex manifold admitting a universal local deformation extends uniquely to a real structure on the local deformation. In Section \ref{realjacobians} we show that any real structure on a family of curves induces a real structure on the relative Jacobian. We prove Theorem \ref{theorem2} in Section \ref{provetheorem2}. In Section \ref{coarsemodulistsack} we define a topology on the real locus of a real algebraic stack. For algebraic moduli stacks over $\RR$, the induced topological space becomes a moduli space of real varieties. We conclude in Section \ref{realan} that for abelian varieties and curves, the so-obtained real moduli spaces coincide with those defined by Gross-Harris \cite{grossharris} and Sepp\"al\"a-Silhol \cite{seppalasilhol2}.

\subsection{Acknowledgements}
I would like to thank my thesis advisor Olivier Benoist for his great guidance, encouragement and support. I would also like to sincerely thank the referee for carefully reading the manuscript. His or her comments substantially improved the quality of this paper.

\section{Real Abelian Subvarieties in Family} \label{realfamily}

In this section we shall provide the setup for our proof of Theorem \ref{theorem1}, which we present hereafter, in Section \ref{densityproofsection}. 
\\
\\
Let $\psi: \ca A \to B$ as in Section $1$. Let $\bb V_{{\bb Z}} =  R^1\psi_{*}{\bb Z}$ be the ${\bb Z}$-local system attached to $\psi$. The holomorphic vector bundle $\ca H  =  \bb V_{{\bb Z}}  \otimes_{{\bb Z}} \OO_{B}$ is endowed with a filtration by the holomorphic subbundle $F^1\ca H = \ca H^{1,0} \subset \ca H$, of fiber $(\ca H^{1,0})_t = H^{1,0}(A_t) \subset H^1(A_t, {\bb C})= \ca H_t$. Denote by $\ca H_{{\bb R}}$ the real $C^{\infty}$-subbundle of $\ca H$ whose fibers are $(\ca H_{{\bb R}})_t = H^1( A_t, {\bb R})$. Define $G = \Gal({\bb C}/{\bb R})$. Recall that $\ca A $ and $B$ are endowed with anti-holomorphic involutions $\tau$ and $\sigma$ such that $\psi \circ \tau = \sigma \circ \psi$. The map $\tau$ induces, for each $t \in B$, an anti-holomorphic isomorphism $\tau: A_t \cong A_{\sigma(t)}$. The pullback of $\tau$ gives an isomorphism of fibers $\tau^*: \ca H_{\sigma(t)} \to \ca H_{t}$ inducing an involution of differentiable manifolds
$$
F_{\infty}: \ca H \to \ca H 
$$
over the involution $\sigma: B \to B$. Composing $F_{\infty}$ fiberwise with complex conjugation provides an involution of differentiable bundles 
$$
F_{dR}: \ca H \to \ca H
$$ over $\sigma$ which respects the Hodge decomposition by \cite[I, Lemma 2.4]{silholsurfaces}. If we let $\mr G(k,\ca H^{1,0})$ be the complex Grassmannian bundle of complex $k$-planes in $\ca H^{1,0}$ over $B$, and $\mr G(2k, \ca H_{{\bb R}})$ the real Grassmannian bundle of real $2k$-planes in $\ca H_{{\bb R}}$ over $B$, we see that $G$ acts on these bundles via the morphisms $F_{dR}: \ca H^{1,0} \to \ca H^{1,0}$ and $\tau^\ast: \ca H_{{\bb R}} \to \ca H_{{\bb R}}$. Since the diffeomorphism $\ca H_{{\bb R}} \to \ca H^{1,0}$, defined as the composition of morphisms
$$
\ca H_{{\bb R}} \hookrightarrow \ca H = \ca H^{1,0} \oplus \ca H^{0,1} \to \ca H^{1,0},
$$
is $G$-equivariant, it induces a $G$-equivariant morphism of differentiable manifolds
\[
\mr G(k,\ca H^{1,0}) \to \mr G(2k, \ca H_{{\bb R}}).
\]
Let $0 \in B({\bb R})$ and choose a $G$-stable contractible neighbourhood $U$ of $0$ in $B$. Trivialize $\bb V_{\bb Z}$ over $U$, which trivializes $\mr G(2k, \ca H_{{\bb R}})$ over $U$, and consider the morphism $\Phi$ defined as the composite
$$\mr G(k,\ca H^{1,0})|_U \to \mr G(2k, \ca H_{{\bb R}})|_U \cong U \times \tn{Grass}_{{\bb R}}(2k, H^{1}(A_0, {\bb R})) \to \tn{Grass}_{{\bb R}}(2k, H^{1}(A_0, {\bb R})).$$ Let $j$ be the canonical map $\tn{Grass}_{{\bb Q}}(2k, H^{1}(A_0, {\bb Q})) \to \tn{Grass}_{{\bb R}}(2k, H^{1}(A_0, {\bb R}))$. We obtain the following diagram: 
\begin{equation} \label{Skdiagram}
\xymatrixcolsep{5pc}
\xymatrix{
\mr G(k,\ca H^{1,0})|_U\ar[d]^f \ar[r]^{\Phi \hspace{1cm}} & \tn{Grass}_{{\bb R}}(2k, H^{1}(A_0, {\bb R})) \\
U & \tn{Grass}_{{\bb Q}}(2k, H^{1}(A_0, {\bb Q})). \ar[u]_j
}
\end{equation}
\noindent
Write $U(\bb R) = U \cap B(\bb R)$. We shall show how diagram (\ref{Skdiagram}) provides the parametrization of polarized real abelian varieties containing a $k$-dimensional real abelian subvariety. 

\begin{proposition} \label{gequivprop00}
\begin{enumerate}  \label{gequivprop0}
\item  \label{gequivprop1}
The morphisms 
$
f$, $\Phi$ and $j$ in diagram (\ref{Skdiagram}) are $G$-equivariant. 
\item \label{gequivprop2} 
We have 
$$
f(\Phi^{-1}(j(\tn{Grass}_{{\bb Q}}(2k, H^{1}(A_0, \bb Q))^G))) = R_k \cap U({\bb R}),
$$
where $R_k = \{ t \in B({\bb R}): A_t \text{\textit{ contains an abelian subvariety over ${\bb R}$ of dimension }} k \}$. 
\end{enumerate}
\end{proposition}
\begin{proof} 
1. The fact that $f$ and $j$ are $G$-equivariant is immediate from the description of $F_{dR}$. For $\Phi$, it suffices to show that the trivialization $\ca H_{\bb R} \cong U \times H^1(A_0, \bb R)$ is $G$-equivariant, and this map is induced by the restriction $r: \bb V_{\bb R}|_U \to (\bb V_{\bb R})_0$ which is an isomorphism of local systems; but $r$ is unique if we require that $r$ induces the identity on $(\bb V_{\bb R})_0$. 
\\
\\
2. Let $t \in U({\bb R})$ and consider the polarized real abelian variety $(A_t, \tau)$. We have $A_t \cong V / \Lambda$, where $V \cong H^{1,0}(A_t)^{*}$ and $\Lambda \cong H_1(A_t, {\bb Z})$. It follows that $A_t$ contains a complex abelian subvariety $X$ of dimension $k$ if and only if there exists a $k$-dimensional ${\bb C}$-vector subspace $W_1 \subset H^{1,0}(A_t)$ and a $k$-dimensional $\bb Q$-vector subspace $W_2 \subset H^1(A_t, \bb Q)$ such that, under the canonical real isomorphism $H^{1,0}(A_t) \cong H^1(A_t, {\bb R})$, the space $W_1$ is identified with $W_2 \otimes {\bb R}$. In this case, $L = W_2^* \cap \Lambda$ is a lattice in $W_1^*$, and $X = W_1^{*} / L$. Then observe that the $k$-dimensional complex abelian subvariety $X \subset A_t$ is a $k$-dimensional real abelian subvariety $(X, \tau|_X)$ of $(A_t, \tau)$ if and only if $\tau(X)  = \tau(W_1^{*} / L) = W_1^{*} / L = X$. The latter is equivalent to $F_{dR}(W_2) = W_2$ by Lemma \ref{lemmatje} below, and we are done.
\end{proof}
\begin{lemma} \label{lemmatje}
Let $A$ be a complex torus, let $\Lambda = H_1(A, \ZZ)$ and consider the Hodge decomposition $\Lambda_\CC = V^{-1,0} \oplus V^{0,-1}$. For an anti-holomorphic involution $\sigma: A \to A$ such that $\sigma(e) = e$, let $F_\infty(\sigma): \Lambda \to \Lambda$ be the pushforward of $\sigma$, and let $F_{dR}(\sigma): V^{-1,0}  \to V^{-1,0} $ correspond to the differential $d\sigma: T_eA \to T_eA$. This defines a bijection between:
\begin{enumerate}
    \item[(i)] The set of real structures $\sigma: A \to A$.
    \item[(ii)] The set of involutions $F_\infty: \Lambda \to \Lambda$ such that $F_{\infty, \CC} \left( V^{-1,0}  \right) = V^{0, -1} $.
    \item[(iii)] The set of anti-linear involutions $F_{dR}: V^{-1,0}  \to V^{-1,0} $ such that $F_{dR} \left( \Lambda  \right) = \Lambda $.
\end{enumerate}
\end{lemma}

\begin{proof}
If $\sigma: A \to A$ is as in $(i)$, then $F_\infty(\sigma)_\CC$ interchanges the factors of the Hodge decomposition by \cite[I, (2.4)]{silholsurfaces}, so $\sigma \mapsto F_\infty(\sigma)$ is a well-defined map $(i) \to (ii)$. For $F_\infty: \Lambda \to \Lambda$ as in $(ii)$, the restriction to $V^{-1,0}$ of the composition of $F_{\infty, \CC} $ with complex conjugation defines a map $F_{dR} = \text{conj} \circ F_{\infty, \CC} : V^{-1,0} \to V^{-1,0}$ as in $(iii)$, which equals $F_{dR}(\sigma)$ when $F_\infty=F_\infty(\sigma)$. Then $F_\infty \mapsto \text{conj} \circ F_{\infty, \CC}$ determines $(ii)\cong (iii)$. Finally, the map $\sigma \mapsto F_{dR}(\sigma)$, $(i) \to (iii)$ is the map giving the natural correspondence between anti-holomorphic involutions on $A$ preserving $e$ and anti-holomorphic involutions on its universal cover $T_eA$ that preserve $0$ and are compatible with $\pi_1(A,e) = H_1(A, \ZZ)$-orbits.
\end{proof}

\subsection{Proving the density theorem} \label{densityproofsection}

For a smooth manifold $W$ on which a compact Lie group $H$ acts by diffeomorphisms, the set of fixed points $W^H$ has a natural manifold structure that makes it a submanifold of $W$ \cite[Corollary I.2.3]{audin}. $\Phi$ in Diagram (\ref{Skdiagram}) is $G$-equivariant by Proposition \ref{gequivprop00}.\ref{gequivprop1}; denote
by $\Phi^G$ the induced morphism on fixed spaces. We obtain a diagram of $\ca C^{\infty}$-manifolds:
\begin{equation} \label{Skdiagram2}
\xymatrixcolsep{5pc}
\xymatrix{
\mr G(k,\ca H^{1,0})^G|_{U(\bb R)} \ar[d]^f \ar[r]^{\Phi^G\hspace{1cm}} & \tn{Grass}_{{\bb R}}(2k, H^{1}(A_0, {\bb R}))^G \\
U({\bb R}) &  \tn{Grass}_{{\bb Q}}(2k, H^{1}(A_0, {\bb Q}))^G. \ar[u]_j
}
\end{equation}
\noindent
Recall that we obtained the equality $$f(\Phi^{-1}(j(\tn{Grass}_{{\bb Q}}(2k, H^{1}(A_0, \bb Q))^G))) = R_k \cap U({\bb R})$$ in Proposition \ref{gequivprop00}.\ref{gequivprop2}. Recall also the symmetric bilinear form $$
q: H^{1,0}(A_t) \otimes H^{1,0}(A_t) \to T_t^*U
$$ from Section \ref{introduction}, given by the differential of the period map and the isomorphism $H^{0,1}(A_t) \cong H^{1,0}(A_t)^*$ which the polarization induces. Finally, recall the notation (Equation (\ref{rk}), \S \ref{introduction}) $$ R_k  = \{ t \in B({\bb R}): A_t \text{\textit{ contains an abelian subvariety over ${\bb R}$ of dimension }} k \}.$$
Let us fix notation and introduce the aim of this section. 
\begin{notation}
For $t \in B$ and $W \in \Grass_{\bb C}(k, H^{1,0}(A_t))$, let $W^\perp$ denote the orthogonal complement of $W$ in $H^{1,0}(A_t)$ with respect to the polarization $E_t$. Define the sets $\mr E_k, \mr F_k \subset \mr G(k, \ca H^{1,0})$ and $\mr E_k(\bb R), \mr F_k(\bb R),\mr R_{k,U} \subset \mr G(k, \ca H^{1,0})^G$ as follows:
\begin{equation}
\mr E_k = \{(t, W) \in \mr G(k, \ca H^{1,0}) \tn{ : } 0 \to W \otimes W^\perp \to T_t^*B \ \tn{ is exact} \}
\end{equation}
\begin{equation}
\mr F_k = \{(t, W) \in \mr G(k, \ca H^{1,0}) \tn{ : } 0 \to  \wedge^2 W\to W \otimes H^{1,0}(A_t) \to T_t^*B  \tn{ is exact} \} 
\end{equation}
\begin{equation}
\mr E_k(\bb R) = \mr E_k \cap \mr G(k, \ca H^{1,0})^G
\end{equation}
\begin{equation}
\mr F_k(\bb R) = \mr F_k \cap \mr G(k, \ca H^{1,0})^G
\end{equation}
\begin{equation}
\mr R_{k,U} = \Phi^{-1}(j (\tn{Grass}_{\bb Q}(2k, H^{1}(A_0, \bb Q))^G)).
\end{equation}
\end{notation}
\noindent
Note that $f(\mr R_{k,U}) = R_k \cap U(\bb R)$. Our strategy to prove Theorem \ref{theorem1} is the following:
\begin{proposition} \label{propper}
Let $\mr E_k, \mr F_k , \mr R_{k,U} \subset \mr G(k, \ca H^{1,0})$ be as above. One has inclusions
$$
\xymatrix{
\mr F_k(\bb R)|_{U(\bb R)} \white \ar@{^{(}->}[r] \white &\white \mr E_k(\bb R)|_{U(\bb R)} \white \ar@{^{(}->}[r]\white&\white \overline{\mr R_{k,U}} \white\ar@{^{(}->}[r]\white& \white \mr G(k, \ca H^{1,0})^G|_{U(\bb R)} .
}
$$
If $\mr F_k$ is not empty then $\mr F_k(\bb R)$ is dense in $\mr G(k, \ca H^{1,0})^G$. Consequently, if $\mr F_k \neq \emptyset$, then $\mr R_{k,U}$ is dense in $\mr G(k, \ca H^{1,0})^G|_{U(\bb R)}$; in particular, then $R_k \cap U(\bb R)$ is dense in $U(\bb R)$.
\end{proposition}
\noindent
Before proving Proposition \ref{propper} we remark that it implies Theorem \ref{theorem1}. 

\begin{proof}[Proof of Theorem \ref{theorem1}]
It suffices to show that for each $x \in B(\bb R)$ and any $G$-stable contractible open neighborhood $x \in U \subset B$, the set $R_k \cap U \cap B(\bb R)$ is dense in $U \cap B(\bb R)$. Thus let $x \in U \subset B$ be such a real point and complex open neighborhood. Condition \ref{criterion} is satisfied if and only if $\mr F_k$ is non-empty; by Proposition \ref{propper}, we are done.
\end{proof}

\begin{proof}[Proof of Proposition \ref{propper}]
Let us first prove the inclusions. We have $\mr F_k \subset \mr E_k$: if $(t, W) \in \mr F_k$ then $\Ker( W \otimes H^{1,0}(A_t) \to T_t^*B) \subset W \otimes W$. Hence $\mr F_k(\bb R)|_{U(\bb R)} \subset \mr E_k(\bb R)|_{U(\bb R)} $. \\
\\
For the second inclusion, consider again the map $\Phi: \mr G(k,\ca H^{1,0})|_U \to \Grass_{\bb R}(2k, H^1(A_0, \bb R))$ as in Diagram (\ref{Skdiagram}). Define a set $Y \subset \mr G(k,\ca H^{1,0})$ as follows:
$$
Y = \{ x \in \mr G(k,\ca H^{1,0})|_U \tn{ \textit{such that the rank of} } d\Phi \tn{ \textit{is maximal at} }x  \}  \white \subset \white \mr G(k, \ca H^{1,0})|_U.
$$
Then $ Y = \mr E_k|_U$ by \cite[\S 1]{Colombo1990}. Let $Z$ be the set of points $x \in \mr G(k,\ca H^{1,0})^G|_{U(\bb R)}$ such that the rank of $d(\Phi^G): T_x\mr G(k,\ca H^{1,0})^G|_{U(\bb R)}  \to T_{\Phi(x)}\tn{Grass}_{{\bb R}}(2k, H^{1}(A_0, {\bb R}))^G$ is maximal at $x$. We claim that $
Z = Y \cap \mr G(k,\ca H^{1,0})^G|_{U(\bb R)}
$. Indeed, for a point $x \in \mr G(k, \ca H^{1,0})^G|_{U(\bb R)}$, the rank of $d(\Phi^G)$ is maximal at $x$ if and only if the rank of $d \Phi$ is maximal at $x$, since $$T_x\mr G(k, \ca H^{1,0})  = (T_x\mr G(k, \ca H^{1,0}))^G \otimes_{{\bb R}} {\bb C} = T_x\mr G(k, \ca H^{1,0})^G \otimes_{{\bb R}} {\bb C},$$ and similarly for $T_{\Phi(x)} \tn{Grass}_{{\bb R}}(2k,H^1(A_0, {\bb R}))$. Moreover, the differential $d\Phi$ at the fixed point $x \in \mr G(k,\ca H^{1,0})^G|_{U(\bb R)}$ is simply the complexification of the differential $d( \Phi^G)$ at $x$.
\\
\\
Consequently, $Z = Y \cap \mr G(k,\ca H^{1,0})^G|_{U(\bb R)} = \mr E_k \cap \mr G(k, \ca H^{1,0})^G|_{U(\bb R)} = \mr E_k(\bb R)|_{U(\bb R)}$. It follows that the morphism
$$
\Phi^G: \mr G(k,\ca H^{1,0})^G|_{U(\bb R)} \to \tn{Grass}_{{\bb R}}(2k, H^{1}(A_0, {\bb R}))^G
$$
is open when restricted to the set $\mr E_k(\bb R)|_{U(\bb R)}$. Since $j(\tn{Grass}_{{\bb Q}}(2k, H^{1}(A_0, {\bb Q}))^G)$ is dense in $\tn{Grass}_{{\bb R}}(2k, H^{1}(A_0, {\bb R}))^G$ by Lemma \ref{lemmabetween} below, $\Phi^{-1}( j(\tn{Grass}_{{\bb Q}}(2k, H^{1}(A_0, {\bb Q}))^G)) \cap \mr E_k(\bb R)|_{U(\bb R)}$ is dense in $\mr E_k(\bb R)|_{U(\bb R)}$. But $\Phi^{-1}( j(\tn{Grass}_{{\bb Q}}(2k, H^{1}(A_0, {\bb Q}))^G)) \cap \mr E_k(\bb R)|_{U(\bb R)} = \mr R_{k,U} \cap  \mr E_k(\bb R)|_{U(\bb R)}$, so $\mr R_{k,U} \cap  \mr E_k(\bb R)|_{U(\bb R)}$ is dense in $ \mr E_k(\bb R)|_{U(\bb R)}$. In particular, $ \mr E_k(\bb R)|_{U(\bb R)}$ is contained in the closure of $\mr R_{k,U}$ in $ \mr G(k,\ca H^{1,0})^G|_{U(\bb R)} $. The inclusions are thus proved.
\\
\\
It remains to prove the assertion that non-emptiness of $\mr F_k$ implies density of $\mr F_k(\bb R)$ in $\mr G(k, \ca H^{1,0})^G$. Write $\mr G = \mr G(k, \ca H^{1,0})$ and $\mr G(\RR) = \mr G^G$. Let $Z_k \subset \mr G $ be the complement of $\mr F_k$ in $\mr G$. Observe that $Z_k$ equals the set of pairs $(t,W)\in \mr G $ such that the injection $\bigwedge^2 W \to \Ker ( W  \otimes H^{1,0}(A_t) \to T_t^*B )$ is not an isomorphism. This latter map varies holomorphically; therefore, locally on $\mr G$, the set of points where the rank is not maximal is determined by the vanishing of minors in a matrix with holomorphic coefficients. Since $\mr G$ is connected it follows that $Z_k$ is nowhere dense in $\mr G$. If $n = \dim_\CC \mr G$ then $\mr G(\RR)$ is a closed real submanifold of $\mr G$ of real dimension $n$ and we claim that $Z_k(\RR)$ is nowhere dense in $\mr G(\RR)$. Suppose for contradiction that $Z_k(\RR)$ contains a non-empty open set $V \subset \mr G(\RR)$. Locally around $t\in V$, $t \in V \subset \mr G$ is the inclusion of the real locus $0 \in B(\RR)$ of a euclidean open ball $0 \in B \subset \CC^n$ such that $B \cap Z_k = \{f_1 =  \dotsc = f_m = 0 \}$ for some holomorphic functions $f_i: B \to \CC$. Since the $f_i$ vanish on $B(\RR)$, they vanish on $B$ hence $B \subset Z_k$. 
\end{proof}

\begin{lemma}\label{lemmabetween} Let $n, k \in \bb Z_{\geq 0}$. Let $V$ be an $n$-dimensional vector space over $\bb Q$ and consider a linear transformation $F \in \textnormal{GL}(V)_{\bb Q}$ which is diagonalizable over $\bb Q$. For $L \in \{\QQ, \RR\}$, denote by $\bb G(k, V)^F(L)$ the set of $k$-dimensional $L$-subvector spaces $W \subset V \otimes_{\bb Q} L$ for which $F(W) = W$. Then $\bb G(k, V)^F(\bb Q)$ is dense in $\bb G(k, V)^F(\bb R)$. 
\end{lemma}

\begin{proof} 
For $F = \id$ this is an elementary fact. In order to deduce the general case from this, let $\lambda_1, \dotsc, \lambda_r \in \bb Q^*$ be the eigenvalues of $F$, and denote by $V_i : = V^{F = \lambda_i} \subset V, i \in I : =  \{1, \dotsc, r \}$ the corresponding eigenspaces. Eigenspaces are preserved under scalar extension so $(V \otimes_{\bb Q} \bb R)^{F_{\bb R} = \lambda_i} = V_i \otimes_{\bb Q} \bb R$. But a $k$-dimensional $\bb R$-subvector space $W \subset V_{\bb R}$ satisfies $F(W) = W$ if and only if $W = \bigoplus_{i \in I}W_i$ with $W_i$ a $\bb R$-subvector space of $V_i\otimes_{\bb Q} \bb R$ for each $i \in I$ and $\sum_{i \in I} \dim W_i = k$, and $W$ is defined over $\bb Q$ if and only if every $W_i$ is. This means that under the canonical diffeomorphism 
\begin{equation}
\bb G(k, V)^F(\bb R)  \cong \underset{\sum k_i = k}{\bigsqcup} \prod_{i \in I} \bb G(k_i, V_i)(\bb R) 
\end{equation}
the rational subspace $ \bb G(k, V)^F(\bb Q) $ is identified with $\bigsqcup_{k_i} \prod_{i \in I} \bb G(k_i, V_i)(\bb Q)$. 
\end{proof}

\section{Density in Deformation Spaces} \label{satisfydensity}

Let  
$
\psi$ be a family such as family (\ref{family}) in \S \ref{introduction}: we assume $\psi$ to be a holomorphic family $\psi: \ca A \to B$ of complex abelian varieties of dimension $g$, polarized by a section $E \in R^2\psi_*\bb Z$. For $t \in B$, denote by $\omega_t \in H^1(A_t, \Omega^1_{A_t})$ the K\"ahler class corresponding to the polarization $E_t \in H^2(A_t, \bb Z)$. Let $t \in B$, define $X = \psi^{-1}(t)$, and suppose that $\psi$ is a universal local deformation of $(X, \omega_t)$: the Kodaira-Spencer map gives $\rho: T_tB \cong H^1(X, T_X)_{\omega}$, where $H^1(X, T_X)_{\omega}$ is defined to be the kernel of $H^1(X, T_X) \to H^2(X, T_X \otimes \Omega^1_X) \to H^2(X, \OO_X)$, the first map being the cup-product with $\omega_t$ and the second induced by $T_X \otimes \Omega^1_X \to \OO_X$. 

\begin{proposition} \label{satisfycondition}
Condition \ref{criterion} is satisfied for $t \in B$ and any $W \in \textnormal{Grass}_\CC(k, H^{1,0}(A_t))$. 
\end{proposition}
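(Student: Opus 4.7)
The plan is to exploit the fact that for an abelian variety $X$ the tangent bundle $T_X$ is trivial, so the entire verification reduces to linear algebra on $H^1(X,\CC) = H^{1,0}\oplus H^{0,1}$. First I would unfold the definition of $H^1(X,T_X)_\omega$: triviality of $T_X$ gives
\[
H^1(X,T_X) \;\cong\; H^1(X,\OO_X)\otimes H^0(X,T_X) \;\cong\; H^{0,1}(X)\otimes (H^{1,0}(X))^*,
\]
and the polarization $E_t$ induces a $\CC$-linear isomorphism $H^{0,1}\cong (H^{1,0})^*$. Under this double identification, cupping with $\omega_t \in H^{1}(X,\Omega^1_X)$ followed by the trace $T_X \otimes \Omega^1_X \to \OO_X$ corresponds to the trace/contraction on $(H^{1,0})^*\otimes (H^{1,0})^*$, so its kernel $H^1(X,T_X)_\omega$ is precisely the symmetric tensors $\mathrm{Sym}^2 (H^{1,0})^*$, recovering the standard description of the tangent space to $\mathcal{A}_g$ at $[X]$ (the dimensions match: $g^2 - g(g-1)/2 = g(g+1)/2$).

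Next I would make the infinitesimal period map concrete. Since $\psi$ is a universal local deformation, $\rho$ identifies $T_tB$ with $\mathrm{Sym}^2(H^{1,0})^*$, and the cup-product pairing $H^1(X,T_X)\otimes H^{1,0}\to H^{0,1}$ becomes the natural evaluation. Restricting to any $W \in \mathrm{Grass}_\CC(k, H^{1,0}(A_t))$ produces a map
\[
\sigma_W\colon T_tB \;\longrightarrow\; \mathrm{Hom}(W,H^{0,1}) \;\cong\; W^*\otimes (H^{1,0})^*,
\]
which under the identifications is simply the pullback of symmetric bilinear forms along the inclusion $W\hookrightarrow H^{1,0}$. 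By picking a basis of $W$, extending to a basis of $H^{1,0}$, and writing out the images of the corresponding basis of symmetric tensors, one checks that the image of $\sigma_W$ consists of exactly those elements of $W^*\otimes (H^{1,0})^*$ whose restriction to $W\otimes W$ is symmetric. This is the sharpest statement possible: the symmetry constraint on $W\otimes W$ is forced by the polarization-induced symmetry of $\mathrm{Sym}^2(H^{1,0})^*$, and, apart from it, $\sigma_W$ is surjective.

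Condition \ref{criterion} is a statement about the infinitesimal motion of the family at $t$ relative to $W \in \mathrm{Grass}_\CC(k, H^{1,0}(A_t))$, and given its role in the density argument it should amount to precisely the surjectivity (modulo the intrinsic symmetry) established above. The main obstacle is therefore not the linear algebra, which is elementary, but matching the precise formulation of Condition \ref{criterion} to the image description of $\sigma_W$: any demand for freedom transverse to the symmetric-on-$W\otimes W$ subspace cannot hold, so the verification hinges on checking that the criterion is formulated so that its target naturally incorporates (or is blind to) this polarization-induced symmetry. Once the translation is made, the proposition follows uniformly in $W$ from the explicit computation of $\mathrm{Im}(\sigma_W)$.
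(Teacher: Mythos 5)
Your route differs from the paper's in its intermediate steps but lands on the same linear-algebra kernel. The paper does not trivialize $T_X$ directly; instead it invokes Griffiths' theorem on the infinitesimal variation of Hodge structure to write the dual of the bilinear form $q$ as a composite $T_tB \to H^1(X,T_X) \to \Hom(H^0(X,\Omega^1_X),H^1(X,\OO_X)) \to H^1(X,\OO_X)^{\otimes 2}$, then chases a commutative diagram with exact rows to show the Kodaira--Spencer isomorphism is equivalent to $q^*$ identifying $T_tB$ with $\Sym^2 H^1(X,\OO_X)$. Your path of trivializing $T_X$, identifying $H^1(X,T_X)\cong H^{0,1}\otimes(H^{1,0})^*$, using the polarization to turn this into $(H^{1,0})^*{}^{\otimes 2}$, and recognizing the kernel of the trace map as $\Sym^2(H^{1,0})^*$ gets to the same place by explicit computation rather than by citing Griffiths and diagram-chasing. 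Each approach has merit: yours is self-contained and makes the abelian structure do the work; the paper's phrasing makes it transparent that the symmetry is the general infinitesimal-period-map symmetry, not something special to abelian varieties. The two final linear-algebra statements are dual to one another: the paper observes that $0 \to \bigwedge^2 W \to W\otimes V \to \Sym^2 V$ is exact for every subspace $W\subset V$, while you describe the image of $\Sym^2 V^* \to W^*\otimes V^*$ as the tensors symmetric on $W\otimes W$, which is exactly the dual exactness statement.

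The one genuine soft spot you already flag yourself: the argument is complete only once one checks that Condition \ref{criterion} is precisely the statement that $q$ restricted to $W\otimes H^{1,0}$ has kernel exactly $\bigwedge^2 W$ (equivalently, the surjectivity-up-to-symmetry of your $\sigma_W$). It is, so your computation does finish the proof; but as written, that final translation is asserted rather than verified, which you acknowledge. If you carry out that last identification explicitly, your proof is correct and stands as an alternative to the paper's.
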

\begin{proof}

By a theorem of Griffiths \cite[Th\'eor\`eme 17.7]{voisin}, the dual $q^*$ of the bilinear form $$q: H^{1}(X, \OO_X)^* \otimes H^{1}(X, \OO_X)^* = H^{1,0}(X) \otimes H^{1,0}(X)  \to T_t^*B$$ (see Equation (\ref{symbil}) in \S \ref{introduction}) factors as
$$
T_tB \to H^1(X, T_X) \to \Hom(H^{0}(X, \Omega^1_X) , H^{1}(X, \OO_X))\xrightarrow{\sim} H^{1}(X, \OO_X) \otimes H^{1}(X, \OO_X). 
$$
The second arrow is an isomorphism. The third arrow is induced by the polarization. We then remark that the following diagram commutes, and that the first row is exact:
\begin{equation}\label{com}
\xymatrixcolsep{3pc}
\xymatrix{
0 \ar[r]  & H^1(X, T_X)_{\omega} \ar[r]  &  H^1(X, T_X)  \ar[r] \ar[d]^{\rotatebox{90}{$\sim$}} & H^2(X, \OO_X) \ar[r]  & 0 \\
0 \ar[r]& T_tB \ar[u]_{\rho}\ar[r]^{q^* \hspace{1cm}} & H^{1}(X, \OO_X) \otimes H^{1}(X, \OO_X) \ar[r] & \bigwedge^2H^{1}(X, \OO_X) \ar[r] \ar[u]_{\rotatebox{90}{$\sim$}}  & 0.
}
\end{equation}
We conclude that the Kodaira-Spencer map $\rho$ is an isomorphism if and only if the second row in (\ref{com}) is exact if and only $q^*$ induces an isomorphism 
\begin{equation} \label{analytickodaira}
q^*: T_tB \to \Sym^2H^1(X, \OO_X).
\end{equation}
Identify $T_t^*B$ with $\Sym^2H^{1,0}(X)$ and $q$ with $H^{1,0}(X) \otimes H^{1,0}(X) \to \Sym^2H^{1,0}(X)$, and observe that for any complex vector space $V$ and \textit{every} $k$-dimensional subspace $W \subset V$, the natural sequence $0 \to\bigwedge^2W \to W \otimes V \to \text{Sym}^2(V)$ is exact.
 \end{proof}

\section{Real Deformation Spaces} 
\label{realmoduli}

In this section we prove that for a universal local deformation of a complex manifold $X$, any real structure on $X$ extends uniquely to a real structure on the local deformation. 
\\
\\
So let $X$ be a compact complex manifold, possibly polarized by the first Chern class $\omega = c_1(\ca L) \in H^2(X, \ZZ)$ of an ample line bundle $\ca L$. Consider a universal local deformation $\pi: \mr X \to B \ni 0$ of $X$ (resp. of $(X, \omega)$), where $B$ is any complex analytic space. Let $\sigma: X \to X$ be an anti-holomorphic involution, compatible with $\omega$ in case $X$ is polarized. Only the germ of $\pi$ in $0 \in B$ plays a role and all statements should be read in this sense.

\begin{proposition}[compare \cite{catanese_frediani_2003}, Section 4] \label{realdef}The real structure $\sigma : X \to X$ extends uniquely to a real structure on the universal local deformation $\pi: \mr X \to B$. In other words, possibly after restricting $(B,0)$ there is a unique couple of anti-holomorphic involutions $\tau: B \to B$, $\mr T: \mr X \to \mr X$ such that $\pi \circ \mr T = \tau \circ \pi$, $\tau(0) = 0$ and $\mr T|_X = \sigma: X \to X$.  
\end{proposition}

\begin{proof}
Consider the complex conjugate analytic spaces $X^\sigma$, $B^\sigma$ and $\mr X^\sigma$ of $X, B$ and $\mr X$ respectively (see \cite[Ch.I, Definition 1.1]{silholsurfaces} for the definition of complex conjugate analytic variety; the definition for general analytic spaces is similar). There is an induced local deformation $\pi^\sigma: \mr X^\sigma \to B^\sigma\ni 0$ of the manifold $\left(\mr X^\sigma \right)_0 = \left(\mr X_0 \right)^\sigma = X^\sigma$. It is easily shown that $\pi^\sigma$ is a universal local deformation of $X^\sigma$. Now the anti-holomorphic involution $\sigma: X \to X$ induces a biholomorphic function $\phi: X^\sigma \to X$ such that $ \phi \circ \phi^\sigma = \id: X \to X$, hence the fibers of $\pi$ and $\pi^\sigma$ above $0$ are isomorphic via $\phi$. By the universal properties of $\pi$ and $\pi^\sigma$, this means that possibly after restricting $B$ around $0$, there is a unique pair of biholomorphisms $f: B^\sigma \to B$, $g: \mr X^\sigma \to \mr X$ making the following diagram cartesian: 
\begin{equation}
\xymatrix{
\mr X^\sigma \ar[r]^{g} \ar[d]^{\pi^\sigma} & \mr X \ar[d]^\pi \\
B^\sigma \ar[r]^{f} & B.
}
\end{equation}
Moreover, $f(0) = 0$ and $g|_{X^\sigma}: X^\sigma \to X$ is the map $\phi$. Applying the functor $(-)^\sigma$ to this diagram, we obtain a pair of cartesian diagrams
\begin{equation} \label{getadiagram}
\xymatrix{
\mr X \ar[d]^\pi\ar[r]^{g^\sigma } & \mr X^\sigma \ar[r]^{g} \ar[d]^{\pi^\sigma} & \mr X \ar[d]^\pi \\
B \ar[r]^{f^\sigma } & B^\sigma \ar[r]^{f} & B
}
\end{equation}
such that $(f \circ f^\sigma)(0) = 0$ and such that $(g \circ g^\sigma)|_{X}: X \to X^\sigma \to X$ is the map $\phi \circ \phi^\sigma = \id$. But this means that $f \circ f^\sigma = \id$ and $g \circ g^\sigma = \id$. Composing $f^\sigma: B \to B^\sigma$ (resp. $g^\sigma: \mr X \to \mr X^\sigma$) with the canonical anti-holomorphic map $B^\sigma \to B$ (resp. $\mr X^\sigma \to \mr X$) gives the desired anti-holomorphic involution $\tau: B \to B$ (resp. $\mr T: \mr X \to \mr X$). 
\end{proof}

\section{Real Structures on Relative Jacobians} \label{realjacobians}

The goal of this section is to prove that a real structure on a family of curves extends canonically to a real structure on the corresponding family of Jacobians.
\\
\\
Let $B$ be a simply connected complex manifold and let $\pi: \mr X \to B$ be a family of compact genus $g$ Riemann surfaces. The relative Jacobian
\begin{equation} \label{jacobian} 
\psi: J_{\mr X} =  \underline{\Pic^0}(\mr X/B) \to B
\end{equation}
of $\pi: \mr X \to B$ can be constructed by considering the exact sequence of sheaves on $\mr X$:
\begin{equation} \label{gsheaves}
0 \longrightarrow \bb Z \longrightarrow \OO_{\mr X} \xrightarrow{\text{exp}2\pi i} \OO_{\mr X}^* \longrightarrow 0.
\end{equation}
Indeed, sequence (\ref{gsheaves}) induces an inclusion $R^1 \pi _* \bb Z \to R^1 \pi _* \OO_{\mr X}$ where we identify $R^1 \pi _* \bb Z$ with its \'etal\'e space and $R^1 \pi _* \OO_{\mr X}$ with its corresponding holomorphic vector bundle; define $J_{\mr X} = R^1 \pi _* \OO_{\mr X}/ R^1 \pi _* \bb Z$. We claim that family (\ref{jacobian}) is a polarized holomorphic family of $g$-dimensional complex abelian varieties in the sense of family (\ref{family}) in Section \ref{introduction}. Indeed, $\psi: J_{\mr X} \to B$ admits a section $s: B \to J_{\mr X}$: since $J_{\mr X}$ represents the relative Picard functor of degree $0$, it corresponds to $\OO_{J_{\mr X}} \in \Pic^0({\mr X})$. For $0 \in B$, we have the Riemann form $E_{0} = - \langle, \rangle: \bigwedge^2H_1({\mr X}_{0}, \bb Z) \to \bb Z$. The trivialization $R^2\pi_*\bb Z \cong H^2(J_{{\mr X}_{0}}, \bb Z)$ implies that 
$$
\Hom_{\bb Z}(\wedge^2  H_1({\mr X}_{0}, \bb Z), \bb Z) = \Hom_{\bb Z}( \wedge^2  H_1(\Jac({\mr X}_{0}), \bb Z), \bb Z)  =H^2({J_{\mr X}}_{0}, \bb Z)  \cong  H^0(B, R^2\pi_*\bb Z).
$$
In this way, $E_{0}$ extends to a principal polarization on the Jacobian family (\ref{jacobian}).

\begin{lemma} \label{realjacobianstructure}
Consider the Jacobian family $(\psi: J_{\mr X} \to B, s: B \to J_{\mr X}, E \in R^2\psi_*\bb Z)$ defined above. Each real structure $(\sigma: B \to B, \sigma': \mr X \to \mr X)$ on the curve $\pi: \mr X \to B$ induces a real structure on the relative Jacobian $\psi: J_{\mr X} \to B$: there exists an anti-holomorphic involution $\Sigma: J_{\mr X} \to J_{\mr X}$ such that
 $$(i) \white \psi \circ \Sigma = \sigma \circ \psi, \white\white (ii) \white \Sigma \circ s = s \circ \sigma, \white\white (iii)\white \Sigma^\ast(E) = -E.$$
\end{lemma}
\begin{proof}
Write $\bb V_{{\bb Z}} = R^1 \pi _* {\bb Z}$ and $\ca H = \bb V_{{\bb Z}}  \otimes_{{\bb Z}} \OO_{B}$. Then $\ca H$ is endowed with a filtration by the holomorphic subbundle $F^1\ca H  \subset \ca H$, and we have $\ca H / F^1\ca H = \ca H^{0,1} = R^1 \pi _* \OO_{\mr X}$. By the strategy in Section \ref{realfamily}, the real structure $(\sigma, \sigma')$ on $\pi: \mr X \to B$ induces an anti-holomorphic involution 
$
F_{dR}: \ca H \to \ca H$ compatible with $\sigma$ and preserving $\ca H^{0,1}$ and $\bb V_{{\bb Z}}$. Since $J_{\mr X} =  \ca H^{0,1} / \bb V_{{\bb Z}} $, the involution $F_{dR}$ induces an anti-holomorphic involution 
$
\Sigma: J_{\mr X} \to J_{\mr X}.  
$
By construction of $F_{dR}$ in Section \ref{realfamily}, we have $\psi \circ \Sigma = \sigma \circ \psi$. For $t \in B$, $\Sigma$ induces an anti-holomorphic map $\Sigma: \Jac(\mr X_{\sigma(t)}) \to \Jac(\mr X_{t})$. We need to prove that 
$
\Sigma^\ast(E_t) = - E_{\sigma(t)}$ for $ \Sigma^\ast: H^2( \Jac({\mr X}_t) , \bb Z) \to H^2( \Jac(\mr X_{\sigma(t)}) , \bb Z)$. This follows from the commutativity of 
$$
\xymatrixcolsep{5pc}
\xymatrix{
H^1({\mr X}_t, {\bb R}) \otimes H^1({\mr X}_t, {\bb R}) \ar[d]^{\Sigma^\ast \otimes \Sigma^\ast} \ar[r]^{\hspace{1cm}\cup} & H^2({\mr X}_t, {\bb R}) \ar[d]^{\Sigma^\ast} \ar[r]^{\sim} & {\bb R} \ar[d]^{-1} \\
  H^1(\mr X_{\sigma(t)}, {\bb R}) \otimes   H^1(\mr X_{\sigma(t)}, {\bb R})  \ar[r]^{\hspace{1cm}\cup} &   H^2(\mr X_{\sigma(t)}, {\bb R}) \ar[r]^{\sim} &  {\bb R}.  } 
$$
The left hand diagram commutes since pullback commutes with cup-product, and the right hand diagram commutes because $\Sigma$ is anti-holomorphic thus reverses the orientation. 
\end{proof}

\section{Proof of Theorem \ref{theorem2}} \label{provetheorem2}

The goal of this section is indeed to prove Theorem \ref{theorem2} using the results of the previous sections. The structure of Section \ref{provetheorem2} is as follows. We start by recalling the construction of the topology on $\mr A_g^\RR$ as in \cite{grossharris} and \cite{silholsurfaces}, and once that is done, we prove Theorem \ref{theorem2}.\textcolor{blue}{A}. We proceed by recalling the construction of the topology on $\mr M_g^\RR$ as in \cite{seppalasilhol2} and then prove Theorem \ref{theorem2}.\textcolor{blue}{B}. Finally, we finish Section \ref{provetheorem2} by proving Theorem \ref{theorem2}\textcolor{blue}{.C}.

\subsection{Density in $\mr A_g^\RR$} \label{sec:densityinag}

The set $\mr A_g^\RR$ of isomorphism classes of principally polarized real abelian varieties of dimension $g$ can be provided with a real semi-analytic space structure as follows. The result seems to have been proven independently by to Gross-Harris \cite[Section 9]{grossharris} and Silhol \cite[Ch.IV, Section 4]{silholsurfaces}. Following \cite[Ch.IV, Definition 4.4]{silholsurfaces}, define $I \subset \bb Z^2$ as 
$$I : = \left\{(\alpha, \lambda) \in \bb Z^{2}:  1 \leq \lambda \leq g  \tn{ and }  \alpha \in \{1,2\} \tn{ such that } \alpha = 1 \tn{ if } \lambda \tn{ is odd } \right\} \cup \left\{ (0,0) \right\}.$$ Attach to each $i = (\alpha,\lambda) \in I$ a matrix $M \in M_{g}(\bb Z)$ as in \cite[Ch.IV, Theorem 4.1]{silholsurfaces}, define $
\Gamma_i  =  \{ A \in \GL_g(\bb Z): A M A^t = M \} \subset \GL_g(\bb Z) \subset \Sp_{2g}(\bb Z)$, where the inclusion 
$\GL_g(\bb Z) \hookrightarrow \Sp_{2g}(\bb Z)$ is defined by $  A \mapsto \big(\begin{smallmatrix}
A &  0 \\
0  & A^{-t}
\end{smallmatrix}\big)$, and define an anti-holomorphic involution $\tau_i : \bb H_g \to \bb H_g$ on the Siegel space $\bb H_g$ by $\tau_i(Z) = M - \overline{Z}$. Let $\bb H_g^{\tau_i}$ be its fixed locus. Then the period map defines a bijection \cite[Proposition 9.3]{grossharris}, \cite[Ch.IV, Theorem 4.6]{silholsurfaces}
\begin{equation} \label{eq:topologyaggrossharris}
\mr A_g^{\bb R} \cong \bigsqcup_{i \in I} \Gamma_i \backslash \bb H_g^{\tau_i}.
\end{equation}

\begin{proof}[Proof of Theorem \ref{theorem2}.\textcolor{blue}{A}]
Let $i \in I$ and $Z \in \bb H_g^{\tau_i}$. Let $(X, \omega\in H^2(X, \ZZ))$ be the principally polarized complex abelian variety with symplectic basis whose period matrix is $Z$. Then $(X, \omega)$ admits a unique real structure $\sigma: X \to X$, compatible with $\omega$ and the symplectic basis \cite[Section 9]{grossharris}. There exists a $\tau_i$-invariant connected open neighborhood $B \subset \bb H_g$ of $Z$ and a universal local deformation $\pi: \mr X \to B \ni Z$ of the polarized complex abelian variety $(X, \omega)$. By Proposition \ref{realdef}, possibly after restricting $B$ around $Z$, the real structure $\sigma: X \to X$ extends uniquely to a real structure on the polarized family $\pi$, which by uniqueness is compatible with $\tau_i : B \to B$. By Proposition \ref{satisfycondition}, Condition \ref{criterion} is satisfied. By Theorem \ref{theorem1}, $R_k \cap B^{\tau_i} $ is dense in $B^{\tau_i} \subset \bb H_g^{\tau_i}$. It follows that $R_k$ is dense in $\bb H_g^{\tau_i}$. 
\end{proof}

\subsection{Density in $\mr M_g^\RR$}
Similarly, Sepp\"al\"a and Silhol provide the set $\mr M_g^\RR$ of real genus $g$ algebraic curves with a topology as follows. Fix a compact oriented $\ca C^{\infty}$-surface $\Sigma$ of genus $g$ and let $\ca T_g$ be the Teichm\"uller space of the surface $\Sigma$ (see e.g. \cite{arbarelloteichmuller}). Define $J \subset \bb Z^2$ to be the set of tuples $(\epsilon, k) \in \bb Z^2$ with $\epsilon \in \{0,1\}$ such that $1 \leq \lambda \leq g+1$ and $k \equiv g + 1 \mod 2$ when $\epsilon = 1$, and $0 \leq k \leq g$ otherwise. To every $j = (\epsilon(j), k(j)) \in J$ one can attach an orientation-reversing involution $\sigma_j: \Sigma \to \Sigma$ of \textit{type $j \in J$} \cite{seppalasilhol2}. This means that $k(j) = \#\pi_0(\Sigma^{\sigma_j})$ and that $\epsilon(j) = 0$ if and only if $\Sigma \setminus \Sigma^{\sigma_j}$ is connected. Moreover, every such involution $\sigma_j: \Sigma \to \Sigma$ induces an anti-holomorphic involution $\sigma_j: \ca T_g \to \ca T_g$ [\textit{loc. cit.}]. Denote by $N_j = \{ g \in \Gamma_g: g \circ \sigma_j = \sigma_j \circ g \}$ the normalizer of $\sigma_j: \ca T_g \to \ca T_g$ in the mapping class group $\Gamma_g$ of $\Sigma$. Then there is a natural bijection \cite[Theorem 2.1, Definition 2.3]{seppalasilhol2} 
\begin{equation} \label{eq:topologymgseppalasilhol}
    \mr M_g^{\bb R} \cong \bigsqcup_{j \in J} N_j \backslash \ca T_g^{\sigma_j}.
\end{equation}

\begin{proof}[Proof of Theorem \ref{theorem2}.\textcolor{blue}{B}] 

Suppose that $g \geq 3$, let $j \in J$ and consider a point $0 \in \ca T_t^{\sigma_j}$. Let $(X, [f])$ be the complex Teichm\"uller curve of genus $g$ corresponding to the point $0$. By \cite{seppalasilhol2}, there is a unique real structure $\sigma: X\to X$ which is compatible with the Teichm\"uller structure $[f]$ and the involution $\sigma_j: \Sigma \to \Sigma$. Moreover, there exists a $\sigma_j$-invariant simply connected open subset $B \subset \ca T_g$ of $0$ in the Teichm\"uller space and a Kuranishi family $\pi: \mr X \to B \ni 0$ of the Riemann surface $X$. By Proposition \ref{realdef}, up to restricting $B$ around $0$, the real structure $\sigma: X\to X$ extends uniquely to a real structure $(\tau, \mr T)$ on the Kuranishi family $\pi$ such that $\tau(0) = 0$. By uniqueness, $\tau: B \to B$ coincides with $\sigma_j$. By Lemma \ref{realjacobianstructure}, $(\tau, \mr T)$ induces a real structure $(\tau, \Sigma)$ on the Jacobian $J_{\mr X} \to B$ of the curve $\pi: \mr X \to B$. Fix $k \in \{1,2,3\}$. Observe that, by Theorem \ref{theorem1}, it suffices to prove that Condition \ref{criterion} holds in $B$. That is, we need to show that there exists an element $t \in B$ and a $k$-dimensional complex subspace $W \subset H^{1,0}(\Jac(\mr X_t))  = H^{1,0}(\mr X_t)$ such that the sequence 
$
0 \to \bigwedge^2 W \to W \otimes H^{1,0}(\mr X_t) \to T_t^*B$ is exact. The family
$
\pi: \mr X \to B
$
is a universal local deformation of $\mr X_t$ for each $t \in B$, hence $ T_tB \cong H^1(\mr X_t, T_{\mr X_t})$. By \cite[Lemme 10.22]{voisin}, the dual of $q: H^{1,0}(\mr X_t) \otimes H^{1,0}(\mr X_t) \to T_t^\ast{B}$ is nothing but the cup-product $
H^0(K_{\mr X_t}) \otimes H^0(K_{\mr X_t}) \to H^0(K_{\mr X_t}^{\otimes 2})$. Consequently, we are reduced to the claim that for each $k \in \{1,2,3\}$ there exists an element $t \in B$ and a $k$-dimensional subspace $W \subset H^0(K_{\mr X_t})$ such that the following sequence is exact:
\begin{equation} \label{exseq}
0 \to \wedge^2 W \to W \otimes H^0(K_{\mr X_t}) \to H^0(K_{\mr X_t}^{\otimes 2}).
\end{equation}
This is true by the \textit{Proof of Theorem (3)} in \cite{Colombo1990}. Indeed, Colombo and Pirola consider the moduli space of complex genus $g \geq 3$ curves $\mr M_g^\CC = \Gamma_g \sm \ca T_g$ to prove the complex analogue of Theorem \ref{theorem2}.\hyperlink{theoremB}{B}. They show that there exists a point $p = [C] \in \mr M_g^\CC$ and a $k$-dimensional complex subspace $W \subset H^0(K_C)$ such that (\ref{exseq}) is exact. So Condition \ref{criterion} is satisfied for some point $t \in \ca T_g$. Since Condition \ref{criterion} is open for the Zariski topology on $\ca T_g$, it is dense for the euclidean topology, hence Condition \ref{criterion} holds for some $t \in B$.  
\end{proof}

\subsection{Density of real plane curves covering an elliptic curve}

\begin{proof}[Proof of Theorem \ref{theorem2}\textcolor{blue}{.C}] 
We need to prove that the subset $\mr R_k(V)$ of $V$ of real plane curves that map non-trivially to elliptic curves over $\bb R$ is dense in $V$, where $V \subset \bb PH^0(\bb P^2_{{\bb R}}, \OO_{\bb P^2_{{\bb R}}}(d))$ is the set of degree $d$ smooth plane curves over ${\bb R}$. Let $d \in \bb Z_{\geq 0}$, $N = {d+2 \choose 2}$ and let $\mr B({\bb C}) \subset H^0(\bb P^2_{\bb C}, \OO_{\bb P^2_{\bb C}}(d)) \cong \bb C^N$ be the Zariski open subset of non-zero degree $d$ homogeneous polynomials $F$ that define smooth plane curves $\{F = 0 \} \subset \bb P^2({\bb C})$. Consider the universal plane curve $\mr B({\bb C}) \times \bb P^2({\bb C}) \supset \mr S(\CC) \to \mr B(\CC)$. The complex vector space $H^0(\bb P^2_{\bb C}, \OO_{\bb P^2_{\bb C}}(d))$ has a real structure, i.e. $\Gal({\bb C} / {\bb R})$ acts anti-linearly on it and this action preserves the space $\mr B({\bb C})$. The induced action on $\mr B({\bb C}) \times \bb P^2({\bb C})$ preserves in turn $\mr S({\bb C})$, and the morphism $\pi : \mr S({\bb C}) \to \mr B({\bb C})$ is Galois equivariant. Note that the family $\pi$ is algebraic, that is, comes from a morphism of algebraic varieties $\mu: \mr S \to \mr B$ over $\bb C$. By the above, $\mr B$, $\mr S$ and $\mu$ are actually defined over $\bb R$. For a projective and flat morphism of locally Noetherian schemes with integral geometric fibers, the relative Picard scheme exists \cite[\S V, 3.1]{FGA}. We obtain an abelian scheme 
$
\tn{\underline{Pic}}_{\mr S/\mr B}^0 \to \mr B
$ over $\bb R$ of relative dimension $g = (d-1)(d-2)/2$. By Theorem \ref{theorem1}, it suffices to satisfy Density Condition \ref{criterion}. In other words, we need to show the existence of $t \in \mr B({\bb C})$ and a non-zero $v \in H^{1,0}( \tn{Jac}(\mr S_t({\bb C})))$  such that $\langle v \rangle \otimes H^{1,0}(\tn{Jac}(\mr S_t({\bb C}))) \to T_t^*\mr B({\bb C})$ is injective. This is done in \cite[\textit{Proof of Proposition (6)}]{Colombo1990}, where Colombo and Pirola prove the complex analogue of Theorem \ref{theorem2}.\hyperlink{theoremC}{C}: an element $t \in \mr B(\CC)$ that satisfies the criterion is the $t \in \mr B(\CC)$ that corresponds to the Fermat equation $F = X_0^d + X_1^d + X_2^d$ (compare \cite[Proposition 3]{kim}).
\end{proof}

\section{The Coarse Moduli Space of a Real Algebraic Stack} \label{coarsemodulistsack}

Consider the set $\mr M_\RR$ (resp. $\mr M_\CC$) of isomorphism classes of a certain category of algebraic varieties over $\RR$ (resp. $\CC$) such that scalar extension defines a map $\mr M_\RR \to \mr M_\CC$ (think of polarized abelian varieties of fixed dimension, algebraic curves of fixed genus, polarized K3-surfaces, etc.). Suppose that $\mr M_{\bb C}$ admits a natural topology or even the structure of a complex analytic space. Can we define a reasonable topology on the set $\mr M_{\bb R}$ such that $\mr M_{\bb R} \to \mr M_{\bb C}$ is continuous? More precisely, if $\ca M$ is an algebraic moduli stack of finite type over $\bb R$ with coarse moduli space $\ca M \to M$, does $|\ca M(\bb R)|$ admit a natural topology making $|\ca M(\bb R)| \to M(\bb C)$ continuous? Contrary to the complex setting, $|\ca M(\bb R)| \to M(\bb R)$ need not be bijective, so that we cannot use the analytic topology on $M(\bb R)$. In this section we consider an algebraic stack $\mr X$ of finite type over $\RR$, pick an $\bb R$-surjective smooth presentation $\phi: X \to \mr X$, endow the set $|\mr X(\bb R)|$ with the quotient topology $\tau$ induced by $X(\bb R) \to |\mr X(\bb R)|$, and prove that $\tau$ does not depend on $\phi$. By Theorem \ref{rfonto2}, if $\mr X$ is Deligne-Mumford, one may use an $\bb R$-surjective \'etale presentation $\phi$ to define $\tau$.

\subsection{Pointwise surjective presentations of Deligne-Mumford stacks} \label{intermezzo}

\begin{notation} \label{notnot}
In this subsection, we fix a field $F$ which is either real closed or finite. For a scheme $S$, an algebraic stack $\mr X$ over $S$ and a scheme $T$ over $S$, we use the notation $|\mr X(T)|$ to denote the set of isomorphism classes of the groupoid $\mr X(T)$. 
\end{notation}

\noindent
The goal of Section \ref{intermezzo} is to prove that every Deligne-Mumford stack $\mr X$ of finite type over $F$ admits an \'etale presentation by an $F$-scheme $X$ such that the induced map $X(F) \to |\mr X(F)|$ is surjective. See Definition \ref{Rontodefinition} \& Theorem \ref{rfonto2} below. This statement is the \'etale analogue of Theorem A in \cite{2019}. In this article, Aizenbud and Avni prove that any algebraic stack $\mr X$ of finite type over a noetherian scheme $S$ admits a smooth presentation $\phi: X \to \mr X$ by an $S$-scheme $X$ such that for every morphism $\Spec F \to S$, the map $X(F) \to |\mr X(F)|$ is surjective. We use their result to extend it in the following way: if $\mr X$ is Deligne-Mumford over $S  = \Spec F$, then $\phi$ can be chosen \'etale.

\begin{definition}[\cite{Sakellaridis2016TheSS}]\label{Rontodefinition}
Let $\mr X_{/F}$ be an algebraic stack. A smooth presentation $X \to \mr X$ by an $F$-scheme $X$ is \textit{$F$-surjective} if the map $X(F) \to |\mr X(F)|$ is surjective. 
\end{definition}

\begin{theorem}[\cite{2019}] \label{smoothonto}
Any algebraic stack $\mr X$ of finite type over $F$ admits an $F$-surjective smooth presentation $\phi: X \to \mr X$ by a scheme $X$ over $F$. $\hfill \qed$
\end{theorem}

\begin{theorem} \label{rfonto2}
Any Deligne-Mumford stack $\mr X$ of finite type over $F$ admits an $F$-surjective \'etale presentation $\phi: X \to \mr X$ by a scheme $X$ over $F$. 
\end{theorem}

\begin{proof}
By Theorem \ref{smoothonto}, there exists an $F$-scheme $X$ and a $F$-surjective smooth presentation $P: X \to \mr X$. Let $y: \Spec F \to \mr X$ be any $F$-point of $\mr X$. Since $P$ is $F$-surjective, there exists an $F$-point $x: \Spec F \to X$ such that $P \circ x \cong y$. We claim that there exists a subscheme $j: U \hookrightarrow X$ such that $P|_U: U \to \mr X$ is \'etale, and such that $x: \Spec F \to X$ factors through an $F$-point $u: \Spec F \to U$. It will follow that $P|_U \circ u = P \circ j \circ u = P \circ x \cong y$. By taking the disjoint union $Y$ of all such subschemes $U \subset X$ we thus construct a scheme $Y$ over $F$ together with an $F$-surjective \'etale presentation $Y \to \mr X$. In other words, the claim implies the theorem. So let us prove this claim. The first part of its proof follows closely the proof of Theorem 8.1 in \cite{LM-B}; we shall shortly repeat those arguments for the convenience of the reader. Let $Z = X \times_{P, \mr X, P} X$ with $p_1: Z \to X$ and $p_2: Z \to X$ the two projections. Now $\Delta: \mr X \to \mr X \times_{F} \mr X$ is unramified \cite[Lemme 4.2]{LM-B} hence $p: Z \to X \times_{F} X$ is unramified. Consequently, $
p_1^*\Omega^1_{X/ F} \oplus p_2^*\Omega^1_{X/ F} =  p^*\Omega^1_{X \times_{F} X / F} \to \Omega^1_{Z / F} 
$ is surjective \cite[\S IV, 4, 17.2.2]{EGA}, hence by \cite[(8.2.3.2)]{LM-B}, the natural morphism of quasi-coherent $\ca O_X$-modules 
$
\Omega^1_{X/F} \to \Omega^1_{X / \mr X}
$ 
is surjective and $\Omega^1_{X/\mr X}$ is an $\ca O_X$-module locally free of finite rank. Let $r$ be the rank of $\Omega^1_{X/\mr X}$ around the point $x \in X(F)$. Because $\Omega^1_{X/F} \to \Omega^1_{X / \mr X}$ is surjective, there exists global sections $f_1, \dotsc, f_r$ of $\ca O_X$ of which the differentials at the point $x$ form a basis of the $k(x)$-vector space $\Omega^1_{X/\mr X} \otimes k(x)$. We obtain $F$-morphisms 
$$
f : = (f_1, \dotsc, f_r): X \to \bb A^r_{F}, \white (P,f): X \to \mr X \times_{F} \bb A^r_{F}.
$$
Then $(P,f)$ is a map of smooth algebraic stacks over $\mr X$ whose differential is an isomorphism at the point $x \in X(F)$, hence $(P,f)$ is \'etale at an neighbourhood $X' \subset X$ of $x $ \cite[\S IV, 4, 17.11.2]{EGA}. Replace $X$ by $X'$ and define $j: U \hookrightarrow X$ by the cartesian diagram
$$
\xymatrixcolsep{5pc}
\xymatrix{
U \ar[r] \ar[d]^j & \mr X \ar[d] & \mr X \times_{F} \Spec F \ar[dl]^{\id \times f(x)} \ar@{=}[l]\\
X \ar[r]^{(P,f) } & \mr X \times_{F} \bb A^r_{F}.  & }
$$
Then $P|_U: U \to \mr X$ is \'etale since $(P,f)$ is \'etale. Moreover, the morphisms $x: \Spec F \to X$ and $y: \Spec F \to \mr X$ are such that their images in $(\mr X \times_{F} \bb A^r_{F})(F)$ are given by 
$$(P \circ x,  f(x), \id ) \in (\mr X \times_{F} \bb A^r_{F})(F) \white \tn{ and } \white (y, f(x), \id) \in (\mr X \times_{F} \bb A^r_{F})(F).$$
But clearly the isomorphism between $P \circ x$ and $y$ in $\mr X(F)$ induces an isomorphism between $(P \circ x,  f(x), \id )$ and $(y, f(x), \id) $ in $(\mr X \times_{F} \bb A^r_{F})(F)$. By the universal property of the $2$-fibre product, $x$ and $y$ induce the required map $
u = (x,y): \Spec F \to U$. 
\end{proof}

\subsection{Topology on the real locus of a real algebraic stack}

\begin{definition}\label{deffer}
Let $\mr X_{/ \bb R}$ be an algebraic stack of finite type. If $U \to \mr X$ is an $\bb R$-surjective smooth presentation (see Definition \ref{Rontodefinition}), the \textit{real analytic topology} on $|\mr X(\bb R)|$ is the quotient topology on $|\mr X(\bb R)|$ induced by the real analytic topology on $U(\bb R)$. 
\end{definition}
\begin{proposition} \label{th1}
The real analytic topology on $|\mr X(\bb R)|$ does not depend on the choice of $\bb R$-surjective presentation $U \to \mr X$. 
\end{proposition}
\begin{proof}
Consider two $\bb R$-surjective presentations $U \to \mr X$ and $V \to \mr X$. The stack $W: = V \times_{\mr X} U$ is an $\bb R$-scheme of finite type, and $\pr_U: W \to U$ and $ \pr_V: W \to V$ are smooth. We claim that $|\pr_U |: W(\bb R) \to U(\bb R)$ and $|\pr_V |: W(\bb R) \to V(\bb R)$ are surjective. Indeed, if $x \in U(\bb R)$ then $\pi(x) \in \tn{Ob}(\mr X(\bb R))$ which, by $\bb R$-surjectivity, implies that there is a $y \in V(\bb R)$ and an isomorphism $\psi: \eta(y) \cong \pi(x)$ in $\mr X(\bb R)$. Then $(x,y, \psi) \in \tn{Ob}(W(\bb R))$ and $|\pr_U|(x,y, \psi) = x$. Furthermore, $|\pr_U|$ and $|\pr_V|$ are open by Lemma \ref{2} below.
\end{proof}

\begin{lemma} \label{2}
Let $g: X \to Y$ be a morphism of schemes which are locally of finite type over $\bb R$. Let $|g|: X(\bb R) \to Y(\bb R)$ be the induced map of real analytic spaces. The morphism of analytic spaces $|g|$ is open if the morphism of schemes $g$ is smooth. 
\end{lemma}
\begin{proof}
We can work locally on $X(\bb R)$; let $U \subset X$ and $V \subset Y$ be affine open subschemes with $g(U)  \subset V$ such that $g|_U = \pr_2 \circ \pi$, where $\pi: U \to \bb A^d_V$ is \'etale for some integer $d \geq 0$, and $\pr_2: \bb A^d_V \to V$ is the projection on the right factor. Then $U(\bb R)$ is open in $X(\bb R)$ because $X(\bb C) \to X$ is a morphism of ringed spaces \cite[\S XII, 1.1]{SGA1}. 
It suffices to prove $U(\bb R) \to V(\bb R)$ is open, but this map factors as the composition $U(\bb R) \xrightarrow{|\pi |} \bb R^d \times V(\bb R) \xrightarrow{|\pr_2|} V(\bb R)$, where $|\pr_2|$ is open and $|\pi |$ a local homeomorphism. 
\end{proof}

\begin{corollary}\label{DMcorollary}
If $\mr X$ is a Deligne-Mumford stack of finite type over $\RR$, the topology induced on $|\mr X(\RR)|$ by any $\RR$-surjective \'etale presentation as in Theorem \ref{rfonto2} coincides
with the real-analytic topology. $\hfill \qed$
\end{corollary}
\begin{remark}
The assignment of a topological space to an algebraic stack of finite type over the real numbers is functorial. Moreover, if the real algebraic stack $\mr X$ admits a coarse moduli space $\pi: \mr X \to M$, then the natural map $|\mr X(\bb R)| \to M(\CC)$ is continuous. 
\end{remark}

\section{Comparing the Real Moduli Spaces} 
\label{realan}

Let $\ca A_g$ be the algebraic stack over $\RR$ of principally polarized abelian varieties of dimension $g$, and let $\ca M_g$ be the stack over $\RR$ of smooth, proper and geometrically connected curves of genus $g$. Consider the real analytic topologies on $|\ca A_g(\RR)|$ and $|\ca M_g(\RR)|$, see Definition \ref{deffer}. Our goal is to prove that $|\ca A_g(\RR)|$ is homeomorphic to $\mr A_g^{\bb R}$, and $|\ca M_g(\RR)|$ to $\mr M_g^{\bb R}$, where the topology on $\mr A_g^{\bb R}$ (resp. $\mr M_g^{\bb R}$) is given by Equation (\ref{eq:topologyaggrossharris}) (resp. Equation (\ref{eq:topologymgseppalasilhol})).

\begin{theorem} \label{th:homeomorphismmoduli}
The natural bijection $|\ca A_g(\bb R)| \to \mr A_g^{\bb R}$ is a homeomorphism.
\end{theorem}

\begin{proof}
Let $\ca S \to \ca A_{g}$ be an $\bb R$-surjective \'etale surjection by a scheme $\ca S$ over $\bb R$ (see Theorem \ref{rfonto2}). Recall that $\ca A_g$ is smooth \cite[Remark 1.2.5]{Jong1993}, so that $\ca S(\bb C)$ is a complex manifold. Then $\ca S(\bb R) \to |\ca A_g(\bb R)| = \mr A_g^{\bb R} \cong \bigsqcup_{i \in I} \Gamma_i \setminus \bb H_g^{\tau_i}$ defines a surjective \textit{real period map}
\begin{equation} \label{realperiodmap}
\mr P: \ca S(\bb R) \longrightarrow \bigsqcup_{i \in I} \Gamma_i \setminus \bb H_g^{\tau_i}.
\end{equation}
We claim that $\mr P$ is continuous and open; this claim will finish the proof. 
\\
\\
In order to prove this we may work locally on $\ca S(\RR)$. So fix a point $0 \in \ca S(\RR)$, define $G = \Gal(\CC/\RR)$ as before and consider a $G$-stable contractible open neighbourhood $B$ of $0$ in $\ca S(\bb C)$ such that $B(\bb R) = B \cap \ca S(\bb R)$ is connected. The universal family $\ca X_g \to \ca A_g$ induces a holomorphic family of complex abelian varieties $\phi: A \to B$ with real structure $\tau: A \to A$, $\sigma: B \to B$, polarized by some section $E \in R^2\phi_\ast\ZZ$.  
\\
\\
Consider the real abelian variety $(A_0, \tau: A_0 \to A_0)$. Define $\Lambda_0$ to be the lattice $H_1(A_0, \ZZ)$ and consider the alternating form $E_0: \Lambda_0 \times \Lambda_0 \to \bb Z$. By \cite[Section 9]{grossharris}, there exists a symplectic basis $\underline m = \{m_1, \dotsc, m_g; n_1, \dotsc, n_g \}$ for $\Lambda_0$ and a unique element $i \in I$ such that the matrix corresponding to $\tau_*$ with respect to $\underline m$ is the matrix
\begin{equation} \label{eq:T}
    T = \begin{pmatrix}
 I_g & M\\
 0 & -I_g
\end{pmatrix} \in \GL_{2g}(\ZZ),
\end{equation}
where $M \in M_g(\ZZ)$ corresponds to $i \in I$ as in \cite[Ch.IV, Definition 4.4]{silholsurfaces} (see Section \ref{sec:densityinag}). 
\\
\\
The canonical trivialization $R^1\phi_*\bb Z \cong H^1(A_0, \bb Z)$ is $G$-equivariant, and induces
\begin{equation}\label{trivialization2}
r: R^1\phi_*\bb Z\cong H^{1}(A_0, \bb Z) \xrightarrow{E_0} H_1(A_0, \bb Z) = \Lambda_0, \tn{ inducing } r_t: H_1(A_t, \ZZ) \cong \Lambda_0 \; \forall t \in B.
\end{equation}
Consequently, for every $t \in B$, the lattice $H_1(A_t, \bb Z)$ is equipped with a symplectic basis $r_t^{-1}(\underline m) = \{m(t)_1, \dotsc, m(t)_g; n(t)_1, \dotsc, n(t)_g\}$. We claim that these bases are compatible with the real structure of $\phi: A \to B$ in the sense that for all $t \in B$, the pushforward
$$
\tau_*: H_1(A_t, \bb Z) \to H_1(A_{\sigma(t)}, \bb Z)
$$
is given by matrix $T$ of Equation (\ref{eq:T}) with respect to the bases $r_t^{-1}(\underline m)$ and $r_{\sigma(t)}^{-1}(\underline m)$. Indeed, this follows from the fact that the following diagram commutes:
\begin{equation} \label{comofcom}
\xymatrixcolsep{2pc}
\xymatrix{
\Lambda_0\ar[d]^{\tau_*} &H^1(A_0, \bb Z)\ar[d]^{-\tau*} \ar[l]&H^1(A_t, \bb Z) \ar[d]^{-\tau_*}\ar[l] & H_1(A_t, \bb Z) \ar[l]\ar[d]^{\tau_*} \\ 
\Lambda_0 &H^1(A_0, \bb Z)\ar[l] &H^1(A_{\sigma(t)}, \bb Z) \ar[l] & H_1(A_{\sigma(t)}, \bb Z) \ar[l] 
}
\end{equation}
The first and the third square commute because if $\tau: A_t \to A_{\sigma(t)}$ is the anti-holomorphic restriction of $\tau$ to the fibre $A_t$, then $E_t(x,y) = - E_{\sigma(t)}(\tau_\ast(x), \tau_\ast(y))$ for all $x,y \in H_1(A_t, \ZZ)$. The second diagram commutes because the trivialization of $R^1\phi_*\bb Z$ is $G$-equivariant. 
\\
\\
Now the family of symplectic bases $\{r_t^{-1}(\underline m)\subset H_1(A_t, \bb Z) \}_{t \in B}$ induces a holomorphic period map $P: B \to \bb H_g$, which is defined by sending an element $t \in B$ to the period matrix of the abelian variety $A_t$ with respect to the symplectic basis $r_t^{-1}(\underline m)$. Note that the differential $dP$ defines an isomorphism on each tangent space since $\ca S \to \ca A_g$ is \'etale. 
\\
\\
Consider the anti-holomorphic involution $\tau_i: \bb H_g \to \bb H_g$ defined by $\tau_i(Z) = M - \bar Z$, see Section \ref{sec:densityinag}. We claim that $P$ is Galois-equivariant, in the sense that $\tau_i \circ P = P \circ \sigma$. Assuming this claim for the moment, we obtain an induced morphism of smooth manifolds $P_\RR: B(\RR) = B^\sigma \to \bb H_g^{\tau_i}$, whose differential is an isomorphism on each tangent space because the same holds for $P$. In particular, $P_\RR$ is open, and hence the composition 
\begin{equation} \label{eq:composition}
    B(\RR) \xrightarrow{P_\RR} \bb H_g^{\tau_i} \to \Gamma_i \setminus \bb H_g^{\tau_i}
\end{equation}
is open as well. But (\ref{eq:composition}) equals the restriction of $\mr P$ to the small open subset $B(\RR) \subset \ca S(\RR)$. Therefore, $\mr P$ is continuous and open at every point of $\ca S(\RR)$, and we are done. 
\\
\\
So we are reduced to the claim that, for all $t \in B$, one has $\tau_i\left(P(t)\right) = P(\sigma(t))$. Consider the symplectic basis $r_t^{-1}(\underline m) = \{m(t)_i; n(t)_j\}_{ij}
\subset H_1(A_t, \ZZ)$. Let $\{\omega(t)_1, \dotsc, \omega(t)_g\} \subset H^0(A_t, \Omega_{A_t}^1)$ be the basis dual to $\{m(t)_1, \dotsc, m(t)_g\}$ under the natural pairing
\begin{equation} \label{eq:pairing}
    H_1(A_t, \ZZ) \times H^0(A_t, \Omega_{A_t}^1) \to \CC,\white (\gamma, \omega) \mapsto \int_\gamma\omega.
\end{equation}
Then we have 
$
P(t)= \left(\int_{n(t)_i} \omega(t)_j\right)_{ij} \in \bb H_g. 
$
On the other hand, the following diagram commutes by a generalization of Lemma \ref{lemmatje}: 
\begin{equation} \label{comofcom}
\xymatrixcolsep{2pc}
\xymatrix{
 H_1(A_t, \bb Z) \ar[d]^{\tau_*}  \ar[r] & H^0(A_t, \Omega^1_{A_t})^\vee \ar[d]^{F_{dR}} \\
H_1(A_{\sigma(t)}, \bb Z)  \ar[r]& H^0(A_{\sigma(t)}, \Omega^1_{A_{\sigma(t)}})^\vee,
}
\end{equation}
where $F_{dR}$ is the anti-linear map induced by the differential $d\tau: T_eA_t \to T_eA_{\sigma(t)}$ of $\tau: A_t \to A_{\sigma(t)}$ and the identification $T_eA = H^0(A, \Omega^1_{A})^\vee$ for any abelian variety $A$. This implies that (\ref{eq:pairing}) is compatible with $\tau_\ast$ and $F_{dR}$, and $F_{dR}\left(\omega(t)_j \right) = \omega(\sigma(t))_j$. Therefore,
$$
\tau_j \left( P(t) \right) = M - \overline{\left(\int_{n(t)_i} \omega(t)_j\right)_{ij}} = M - \left(\int_{\tau_\ast(n(t)_i)} F_{dR}\left(\omega(t)_j \right)\right)_{ij} = M - 
\left(\int_{\tau_\ast(n(t)_i)} \omega(\sigma(t))_j\right)_{ij}. 
$$
If $M = \left( x_{ij} \right)_{ij}$, and $
\phi_t: H_1(A_t, \ZZ) \cong \ZZ^{2g}
$ is the isomorphism identifying $r_t^{-1}(\underline m)$ with the standard symplectic basis $\{ e_1, \dotsc, e_g ; f_1, \dotsc , f_g\} \subset \ZZ^{2g}$, then, for $T \in \GL_{2g}(\ZZ)$ as in (\ref{eq:T}):
$$
\tau_\ast(n(t)_i) = \phi_{\sigma(t)}^{-1}\left( T\cdot f_i \right) = \phi_{\sigma(t)}^{-1}\left(  \sum_{k = 1}^g x_{ki} e_k - f_i \right) = \sum_{k = 1}^g x_{ki} m(\sigma(t))_k  - n(\sigma(t))_i \in H_1(A_{\sigma(t)}, \ZZ). 
$$
Therefore, we get that $\int_{\tau_\ast(n(t)_i)} \omega(\sigma(t))_j = \int_{\sum_{k = 1}^g x_{ki} m(\sigma(t))_k  - n(\sigma(t))_i} \omega(\sigma(t))_j$, hence 
$$
\left(\int_{\tau_\ast(n(t)_i)} \omega(\sigma(t))_j\right)_{ij} = 
\left(\int_{x_{ji} m(\sigma(t))_j} \omega(\sigma(t))_j\right)_{ij} - 
\left(\int_{n(\sigma(t))_i} \omega(\sigma(t))_j\right)_{ij} = (x_{ji})_{ij} - P\left(\sigma(t)\right). 
$$
But $\left( x_{ji}\right)_{ij} = \left( x_{ij}\right)_{ij} = M$ for $M$ is symmetric, whence our claim $\tau_i\left(P(t)\right) = P(\sigma(t))$.
\end{proof}

\begin{theorem} \label{th:homeomorphismmoduli2}
The natural bijection $|\ca M_g(\bb R)| \to \mr M_g^{\bb R}$ is a homeomorphism. 
\end{theorem}

\begin{proof}
We proceed as in the proof of Theorem \ref{th:homeomorphismmoduli}. Let $\ca S \to \ca M_{g}$ be an $\bb R$-surjective \'etale presentation by a scheme $\ca S$ over $\bb R$ corresponding to a family of genus $g$ curves $\ca C \to \ca S$ over $\bb R$. The composition $\mr P: \ca S(\bb R) \to |\ca M_g(\bb R)| = \mr M_g^{\bb R} \cong \bigsqcup_{j \in J} N_j \setminus \ca T_g^{\sigma_j}$ is surjective. The goal is to prove that $\mr P$ is continuous and open, which will imply the result. 
 \\
 \\
Note that $\ca S(\CC)$ is a complex manifold because ${\ca M_g}$ is smooth \cite[Theorem 5.2]{DM69}. Fix $0 \in \ca S(\RR)$ and consider a $G$-stable contractible open neighborhood $B \subset \ca S(\CC)$ of $0$ such that $B(\RR) = B \cap \ca S(\RR)$ is connected. Let $\phi: C \to B$ be the induced family of Riemann surfaces over $B$ with real structure $\sigma: B \to B$, $\tau: C \to C$. By \cite{seppalasilhol2}, there exists a Teichm\"uller structure $[f_0], f_0: C_0 \xrightarrow{\sim} \Sigma$ and a unique $j \in J$ such that the composition
$$
\Sigma \xrightarrow{f_0^{-1}} C_0 \xrightarrow{\tau} C_0 \xrightarrow{f_0} \Sigma 
$$
is isotopic to the map $\sigma_j: \Sigma \to \Sigma$ - in other words, such that $[f_0 \circ \tau \circ f_0^{-1} ] = [\sigma_j]$. 
\\
\\
Since the Kodaira-Spencer morphism $\rho: T_0B \to H^1(C_0, T_{C_0})$ is an isomorphism, the family $\phi: C \to B$ is a Kuranishi family for the fiber $C_0$. Since $B$ is contractible, $\phi: C \to B$ is topologically trivial hence can be endowed with a unique Teichm\"uller structure $\{[f_t], f_t: C_t \to \Sigma\}_{t \in B}$ extending the Teichm\"uller structure $[f_0]$ on $C_0$. For $t \in B$, consider the anti-holomorphic map $\tau: C_t \to C_{\sigma(t)}$. We claim that the two Teichm\"uller structures $[f_t]$ and $[\sigma_{j} \circ f_{\sigma(t)} \circ \tau]$ on $C_t$ agree. Indeed, the family $\left\{ [\sigma_{j} \circ f_{\sigma(t)} \circ \tau] \right\}_{t \in B}$ defines a new Teichm\"uller structure on $\phi: C \to B$, agreeing with $\left\{ [f_t] \right\}_{t \in B}$ at the point $0 \in B$, and therefore agreeing with $\left\{ [f_t] \right\}_{t \in B}$ everywhere on $B$ by \cite[XV, \S 2]{curvesII}.
\\
\\
Consequently, the holomorphic map $P: B \to \ca T_g$ defined as $P(t) = [C_t, [f_t]]$ can be shown to be $G$-equivariant as follows. The involution $\sigma_j: \ca T_g \to \ca T_g$ is defined by sending the class $[C, [f]]$ of a curve $C$ with Teichm\"uller structure $[f]$ to $[C^\sigma, [C^\sigma \xrightarrow{\text{can}} C \xrightarrow{f} \Sigma \xrightarrow{\sigma_j}\Sigma ]] \in \ca T_g$, where $C^\sigma$ is the complex conjugate of $C$ and $\text{can}: C^\sigma \to C$ is the canonical anti-holomorphic map. The family $\phi^\sigma: C^\sigma \to B^\sigma$ is isomorphic to the family $\phi: C \to B$ via the maps $\sigma \circ \text{can}: B^\sigma \to B$ and $\tau \circ \text{can}: C^\sigma \to C$, and the composition $\tau \circ \text{can}: C^\sigma \cong C$ restricts to an isomorphism $C_t^\sigma \cong C_{\sigma(t)}$ for each $t \in B$. Together this implies that 
$
\sigma_j \left( P(t)  \right) = \sigma_j \left( [C_t, [f_t]]  \right) = [C_{\sigma(t)}, [C_{\sigma(t)} \xrightarrow{\tau} C_t \xrightarrow{f_t} \Sigma \xrightarrow{\sigma_j} \Sigma]] = [C_{\sigma(t)}, [f_{\sigma(t)}]] = P(\sigma(t))$. So indeed, $\sigma_j \circ P = P \circ \sigma$. Let $P_\RR : B(\RR) \to \ca T_g^{\sigma_j}$ be the induced morphism on real loci. Since $P$ induces isomorphisms on tangent spaces, the same holds for $P_\RR$. Consider the projection $\pi: \ca T_g^{\sigma_j} \to N_j \setminus \ca T_g^{\sigma_j}$. Then $\pi \circ P_\RR = \mr P|_{B(\RR)}: B(\RR) \to N_j \setminus \ca T_g^{\sigma_j}$, which implies that $\mr P$ is continuous and open at the point $0 \in \ca S(\RR)$, and we are done. 
\end{proof}

\newgeometry{left=10mm, bottom=1in} 
\printbibliography
\end{document}